\newcommand{\R}{{\mathbb R}}
\newcommand{\la}{\lambda}
\newtheorem{thm}{Theorem}[section]
\newtheorem{lem}[thm]{Lemma}
\newtheorem{prop}[thm]{Proposition}
\theoremstyle{definition}
\newtheorem{defin}[thm]{Definition}
\numberwithin{equation}{section}
\begin{document}

%%%%% To ease editing, for IMPAN journals add:

\baselineskip=17pt

%%%%%%%%%%%

%% In the running head, replace first names by initials 
%% and give an abbreviation of the title.

\title[Multiplication of convex sets]{Multiplication of convex sets in $C(K)$ spaces}

\author[J. P. Moreno]{Jos\'{e} Pedro Moreno}
\address{Departamento de Matem{\'a}ticas\\ Facultad
de Ciencias\\ Universidad Aut{\'o}noma de Madrid\\ E-28049
Madrid \\ Spain} 
\email{josepedro.moreno@uam.es}

\author[R. Schneider]{Rolf Schneider}
\address{ Mathematisches Institut\\ Albert-Ludwigs-Universit{\"a}t\\ D-79104 Freiburg
i. Br.\\ Germany}
\email{rolf.schneider@math.uni-freiburg.de}

\date{}

\begin{abstract}
Let $C(K)$ denote the Banach algebra of continuous real functions, with the supremum norm, on a compact Hausdorff space $K$. For two subsets of $C(K)$, one can define their product by pointwise multiplication, just as the Minkowski sum of the sets is defined by pointwise addition. Our main interest is in correlations between properties of the product of closed order intervals in $C(K)$ and properties of the underlying space $K$. When $K$ is finite, the product of two intervals in $C(K)$ is always an interval. Surprisingly, the converse of this result is true for a wide class of compacta. We show that a first-countable space $K$ is finite whenever it has the property that the product of two nonnegative intervals is closed, or the property that the product of an interval with itself is convex. That some assumption on $K$ is needed, can be seen from the fact that, if $K$ is the Stone--\v{C}ech compactification of $\mathbb N$, then the product of two intervals in $C(K)$ with continuous boundary functions is always an interval. For any $K$, it is proved that the product of two  positive intervals is always an interval, and that the product of two nonnegative intervals is always convex. Finally, square roots of intervals are investigated, with results of similar type.
\end{abstract}

\subjclass[2010]{Primary 52A05; Secondary 46J10, 46E15}

\keywords{$C(K)$ space, order interval, product of sets.}

\maketitle

\section{Introduction}\label{sec1}

The relations between properties of the Banach space $C(K)$ of continuous real functions on a compact Hausdorff space and properties of the underlying space $K$ have been an object of study in analysis and topology for decades. The vector addition of closed convex sets in a real vector space, also called Minkowski addition, is a much investigated operation with many applications.  These two facts were a motivation to initiate in \cite{MS15} a study of Minkowski addition of closed convex sets in $C(K)$. We restricted ourselves there to particularly simple classes of convex sets, mainly intervals and intersections of balls. An {\em interval} $[f,g]$ in $C(K)$ is the set of all $h \in C(K)$ with $f\le h\le g$, where $f$ and $g$ are bounded real functions on $K$ (not necessarily elements of $C(K)$). Intervals in this sense are closed and convex, and the set of intervals is closed under Minkowski addition (a non-trivial fact). One of the aims of \cite{MS15} was to investigate to what extent familiar properties of the Minkowski addition in Euclidean spaces carry over to intervals and other convex sets in $C(K)$. As it turned out, some of these properties characterize the underlying space $K$ as being extremally disconnected. 

Now, $C(K)$ is not only a Banach space, but a Banach algebra, and this fact suggests to study not only the pointwise addition, but also the pointwise multiplication of convex sets. That the multiplication of sets in $C(K)$, even of open balls, bears some surprises and difficulties, was already evident from the papers \cite{BWW05}, \cite{Beh11a}, \cite{Beh11}, \cite{Beh16}, \cite{Kom06}, \cite{Kow10}. In \cite{Kom06}, for example, it is proved that multiplication in $C(K)$ is an open mapping if and only if $K$ has topological (covering) dimension zero. In the following, we study mainly the multiplication of intervals in $C(K)$. Let us first mention that in $\mathbb R$ (that is, when $K$ is a singleton) the algebraic product of two compact intervals $I,J$, defined by $I\cdot J:= \{ab: a\in I,\, b\in J\}$, is again an interval. In $\mathbb R^n$ (that is, when $K$ is finite) the same is true, replacing intervals by Cartesian products of intervals. Therefore, if the product of two subsets $\emptyset\not=A,B\subset C(K)$ is defined by $A\cdot B= \{h\eta: h\in A,\, \eta\in B\}$, it is natural to ask whether for two intervals $[f,g]$, $[\varphi,\psi]$ their product $[f,g]\cdot[\varphi,\psi]$ is an interval. This is true, for example, if $[f,g],[\varphi,\psi]$ satisfy $f>0$, $\varphi>0$ (Thm. \ref{Thm3.1}). Perhaps surprisingly, it becomes already false if only, say, $f>0$ and $\varphi\ge 0$. More to the point, if $K$ is first-countable and if the latter condition, together with continuity of $f,g,\varphi,\psi$, is sufficient in order that $[f,g]\cdot[\varphi,\psi]$ be closed, then $K$ must be finite (Thm. \ref{Thm3.2}). 

The condition that $K$ be first countable is needed to ensure, when $K$ is infinite, the existence of a nontrivial convergent sequence, a technical tool used in our constructions. That some assumption of this kind is necessary, is made evident by the fact that in $C(\beta\mathbb N)$ (where $\beta\mathbb N$ denotes the Stone--\v{C}ech compactification of the discrete space $\mathbb N$ of natural numbers) the product of two intervals $[f,g],[\varphi,\psi]$ with continuous functions $f,g,\varphi,\psi$ is always an interval (Prop. \ref{P1}). 

While the product of intervals in $C(K)$ for general $K$ remains unexplored at present, we exhibit more instances where first-countable spaces $K$ with desirable properties of the product of intervals in $C(K)$ must be finite (thus, the exceptional role of extremally disconnected spaces in \cite{MS15} is here played by finite spaces). While the product of two nonnegative intervals in $C(K)$ is always convex (Thm. \ref{Thm3.2a}), all products $[f,g]\cdot[f,g]$ with continuous $f,g$ in $C(K)$ with first-countable $K$ are  convex only if $K$ is finite (Thm. \ref{Thm3.3}). 

Finally, we study square roots of intervals. A closed convex set $C$ with $C\cdot C=[f,g]$ is called a {\em square root} of the interval $[f,g]$. In order that the interval $[f,g]$ have a square root which is an interval, it is necessary that $g\ge 0$ and $|f|\le g$. If $K$ is first-countable and if every interval $[f,g]$ with $-g\le f\le 0<g$ has a square root which is an interval, then $K$ is finite (Thm. \ref{Thm3.6}).

We may remark that our results, as far as they characterize finite spaces $K$ via properties of $C(K)$, are in line with some other results existing in the literature. Probably the best known of this type is the assertion that $K$ is finite if C(K) has finite dimension, or that $K$ is finite if $C(K)$ is reflexive. A less simple example is Prop. 4.3.12 in \cite{AK06}: Suppose that $K$ is metrizable. If $C(K)$ is order-complete, then $K$ is finite.

\section{Preliminaries}\label{sec2}

$K$ is a compact Hausdorff space, and $C(K)$ is the Banach space of continuous real functions on $K$ with the supremum norm $\|\cdot\|_\infty$. Besides the sum $f+g$ of functions $f,g\in C(K)$, there is also a (pointwise defined) product, denoted by $fg$. These operations carry over to subsets $A,B$ of $C(K)$, by means of 
$$ A+B=\{a+b:a\in A,\, b\in B\},\quad A\cdot B=\{ab: a\in A,\,b\in B\}.$$ 

Recall that by an {\em interval} in $C(K)$ we understand a set of the form
$$ [f,g]= \{h\in C(K):f\le h\le g\},$$
where $f,g:K\to\R$ are bounded functions (generally not elements of $C(K)$) and an inequality $f\le h$ is defined pointwise. Every interval is bounded and, as shown in \cite{MS15}, closed and convex (but may be empty). For the handling of intervals, the notion of semicontinuity is needed.

A bounded function $f:K\to R$ is {\em lower semicontinuous} (abbreviated by l.s.c.) if, for each $x\in K$,
$$ f(x)\le \liminf_{y\to x} f(y) := \sup_{U\in{\mathcal U}(x)} \inf_{y\in U\setminus\{x\}} f(y),$$
where ${\mathcal U}(x)$ denotes the system of neighbourhoods of $x$. The function $f$ is {\em upper semicontinuous} (abbreviated by u.s.c.)  if $-f$ is l.s.c. For a bounded function $f$,
the function 
$$ f^\wedge=\sup\{h\in C(K):h\le f\}$$ 
is l.s.c. and is called the {\em lower semicontinuous envelope} of $f$. The function 
$$ f^\vee=\inf\{h\in C(K):h\ge f\}$$ 
is u.s.c. and is called the {\em upper semicontinuous envelope} of $f$. 

As shown in \cite{MS15}, every interval $[f,g]$ satisfies $[f,g]=[f^\vee,g^\wedge]$, and $[f^\vee,g^\wedge]\not=\emptyset$ if and only if $f^\vee\le g^\wedge$. Thus, every nonempty interval has a {\em canonical representation}, that is, one of the form $[f,g]$ with $f$ u.s.c., $g$ l.s.c. and $f\le g$. For the following result, proved in \cite{MS15}, the canonical representation is crucial.

\begin{prop}
If $[f,g]$, $[\varphi,\psi]$ are intervals in $C(K)$ in canonical representation, then
\begin{equation}\label{2.1}
[f,g]+[\varphi,\psi]=[f+\varphi, g+\psi].
\end{equation}
\end{prop}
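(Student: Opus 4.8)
\emph{Proof proposal.} The plan is to prove the two inclusions separately, the first being routine and the second being the heart of the matter. For the inclusion $[f,g]+[\varphi,\psi]\subseteq[f+\varphi,g+\psi]$, I would simply take $h\in[f,g]$ and $\eta\in[\varphi,\psi]$; then $h+\eta\in C(K)$, and the pointwise inequalities $f\le h\le g$, $\varphi\le\eta\le\psi$ add up to $f+\varphi\le h+\eta\le g+\psi$, so $h+\eta\in[f+\varphi,g+\psi]$. This direction uses nothing about the representation. Observe also that $f+\varphi$ is u.s.c. and $g+\psi$ is l.s.c. with $f+\varphi\le g+\psi$, so the right-hand side is itself an interval in canonical representation.

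For the reverse inclusion, the task is a decomposition problem: given $w\in C(K)$ with $f+\varphi\le w\le g+\psi$, I must exhibit $h\in[f,g]$ and $\eta\in[\varphi,\psi]$ with $w=h+\eta$. Writing $\eta=w-h$, the four constraints $f\le h\le g$ and $\varphi\le w-h\le\psi$ are equivalent to the single two-sided bound
$$\alpha:=\max\{f,\,w-\psi\}\ \le\ h\ \le\ \min\{g,\,w-\varphi\}=:\beta.$$
First I would check that $\alpha\le\beta$ holds pointwise. This reduces to four inequalities, each of which is either an assumption ($f\le g$, $\varphi\le\psi$) or a rearrangement of the hypotheses on $w$ (namely $f\le w-\varphi$ from $f+\varphi\le w$, and $w-\psi\le g$ from $w\le g+\psi$).

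It then remains to insert a continuous function $h$ between $\alpha$ and $\beta$, and here the canonical representation is decisive. Since $f$ and $-\psi$ are u.s.c. and $w$ is continuous, $\alpha$ is a maximum of two u.s.c. functions, hence u.s.c.; dually, $\beta$ is a minimum of two l.s.c. functions, hence l.s.c. Thus $\alpha$ is u.s.c., $\beta$ is l.s.c., and $\alpha\le\beta$. Because $K$ is compact Hausdorff, it is normal, so the Kat\v{e}tov--Tong insertion theorem yields an $h\in C(K)$ with $\alpha\le h\le\beta$. Setting $\eta=w-h$ and reading off the defining inequalities of $\alpha$ and $\beta$ gives $f\le h\le g$ and $\varphi\le\eta\le\psi$, while $h+\eta=w$ by construction. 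The main obstacle is precisely this insertion step: without the semicontinuity of the envelopes $\alpha,\beta$ — which is guaranteed only because $f,\varphi$ are u.s.c. and $g,\psi$ are l.s.c. in the canonical representation — no continuous selection need exist, and the identity \eqref{2.1} would fail.
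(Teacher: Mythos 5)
Your proof is correct, and it is exactly the right argument: the paper itself does not reprove this proposition (it cites \cite{MS15}), but your reduction to the two-sided bound $\max\{f,w-\psi\}\le h\le\min\{g,w-\varphi\}$ followed by the Kat\v{e}tov--Tong insertion theorem is precisely the additive analogue of the technique the paper uses in its proof of Theorem \ref{Thm3.1}, where the same $\max/\min$ envelopes and the same insertion theorem handle the multiplicative case. Nothing is missing; the semicontinuity bookkeeping and the verification of $\alpha\le\beta$ are both complete.
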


For intervals in $C(K)$, the properties collected in the following definition will play an important role.

\begin{defin}
An interval is called
\begin{enumerate}[\upshape (i)]
\item {\em signed} if it has a representation $[f,g]$ where either $f>0$ or $g<0$,
\item {\em positive} {\em  (nonnegative)} if it has a representation $[f,g]$ with $f>0$ (with $f\ge 0$),
\item {\em  continuous} if it has a representation $[f,g]$ with continuous functions $f,g$.
\end{enumerate}
\end{defin}

We note that for a signed interval, also its canonical representation $[f,g]$ has the property that either  $f>0$ or $g<0$. Further, a representation $[f,g]$ of an interval with continuous $f,g$ is its canonical representation. We also remark that in \cite{MS15}, Thm. 6.2, the continuous intervals have been characterized as being precisely the summands of balls.

Turning to products, we note an obvious fact on products of real intervals, which will be used repeatedly. Let $I,J\subset\R$ be nonempty, compact intervals. Their (algebraic) product is defined by
$$ I\cdot J:= \{ab: a\in I,\, b\in J\}.$$
(We call this the `algebraic' product, to distinguish it from the Cartesian product.) Let
$$ u =  \min\{ab:a\in I,\,b\in J\},\quad v =  \max\{ab:a\in I,\,b\in J\}.$$
Then $I\cdot J\subset [u,v]$, trivially. If $u=a_0b_0$, $v= a_1b_1$ with $a_0,a_1\in I$ and $ b_0,b_1\in J$ define, for $t\in[0,1]$,
$$ f(t):= \left((1-t)a_0+ta_1\right)\cdot\left((1-t)b_0+tb_1\right).$$
Then $f(0)=u$ and $f(1)=v$. Let $z\in [u,v]$. By continuity, there exists $\tau\in[0,1]$ with $f(\tau)=z$, hence
$$ z=\left((1-\tau)a_0+\tau a_1\right)\cdot\left((1-\tau)b_0+\tau b_1\right),$$
and here $(1-\tau)a_0+\tau a_1\in I$ and $(1-\tau)b_0+\tau b_1\in J$. It follows that $I\cdot J=[u,v]$.

\section{Products of intervals in $C(K)$}\label{sec3}

Recall that $K$ is a compact Hausdorff space and $C(K)$ is the Banach algebra of continuous real functions on $K$ with the supremum norm. In view of the result (\ref{2.1}) on the sum of two intervals, it is natural to ask what is true for their product,
$$
[f,g]\cdot[\varphi,\psi]=\{h\eta: h\in [f,g],\, \eta\in[\varphi,\psi]\}\,.
$$

A particular case is the product of a scalar with an interval (the scalar can be considered as a constant function). Clearly, for an arbitrary interval $[f,g]\in C(K)$, we have $\la [f,g]=[\la f, \la g]$ if $\la>0$ and $\la [f,g]=[\la g, \la f]$ if $\la<0$. The distributive law of multiplication of a scalar with respect to the Minkowski addition of two intervals works fine. By contrast, the multiplication of an interval with respect to the addition of scalars satisfies the distributive law only when the scalars have the same sign. Consider, for instance, the unit ball $[-\bar 1,\bar 1]$ of the space $C(K)$ (we denote by $\bar c$ the constant function with value $c$). Then 
$$ [-\bar 1,\bar 1]=(-1+2)[-\bar 1,\bar 1] \not= (-1)[-\bar 1,\bar 1]+2[-\bar 1,\bar 1]=[-\bar 1,\bar 1]+[-\bar 2,\bar 2]=[-\bar 3,\bar 3].$$

Suppose now that $[f,g],[\varphi,\psi]$ are intervals in $C(K)$. For each $x\in K$ we have
$$ [f(x),g(x)]\cdot[\varphi(x),\psi(x)] = [u(x),v(x)] $$
with
\begin{eqnarray*}
u(x) = \min\{ab: a\in[f(x),g(x)],\,b\in[\varphi(x),\psi(x)]\},\\
v(x) = \max\{ab: a\in[f(x),g(x)],\,b\in[\varphi(x),\psi(x)]\}.
\end{eqnarray*}
This defines bounded functions $u,v$ on $K$. Trivially, we have
$$ [f,g]\cdot [\varphi,\psi]\subset [u,v].$$
We call $(u,v)$ the {\em pair of bounding functions} corresponding to the quadruple $(f,g,\varphi,\psi)$ (although this correspondence is not shown in the notation).

\begin{lem}\label{Lem3.1}
Let $[f,g],[\varphi,\psi]$ be intervals in canonical representation in $C(K)$, and let $(u,v)$ be the pair of bounding functions corresponding to $(f,g,\varphi,\psi)$.
\begin{enumerate}[\upshape (i)]
\item If $[F,G]= [f,g]\cdot[\varphi, \psi]$ is an interval in canonical representation and $x\in K$, then
\begin{equation*}
 [F(x),G(x)]= [f(x),g(x)]\cdot[\varphi(x),\psi(x)].
\end{equation*}
\item If the product $[f,g]\cdot[\varphi, \psi]$ is an interval, then it is the interval $[u^\vee,v^\wedge]$.
\item If $f,g,\varphi,\psi$ are continuous, then $u,v$ are continuous.
\end{enumerate}
\end{lem}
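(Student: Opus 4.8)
The three assertions rest on a single technical device, a strong form of the insertion theorem for semicontinuous functions, so I would establish that first. Since $K$ is compact Hausdorff, it is normal, and the Kat\v{e}tov--Tong theorem guarantees that whenever $P\le Q$ with $P$ u.s.c.\ and $Q$ l.s.c.\ there is a $k\in C(K)$ with $P\le k\le Q$. I would upgrade this to prescribe one value: given a canonical interval $[P,Q]$, a point $x_0\in K$, and $c\in[P(x_0),Q(x_0)]$, there is $k\in[P,Q]$ with $k(x_0)=c$. To see this, let $P'$ agree with $P$ off $x_0$ and equal $c$ at $x_0$, and let $Q'$ agree with $Q$ off $x_0$ and equal $c$ at $x_0$. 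Since $P(x_0)\le c$, each superlevel set $\{P'\ge t\}$ equals $\{P\ge t\}$ or $\{P\ge t\}\cup\{x_0\}$, hence is closed, so $P'$ is u.s.c.; symmetrically $Q'$ is l.s.c.; and $P'\le Q'$ because they coincide at $x_0$ and are unchanged elsewhere. Applying Kat\v{e}tov--Tong to $P',Q'$ yields the desired $k$. In particular $\{k(x_0):k\in[P,Q]\}=[P(x_0),Q(x_0)]$ for every canonical interval.

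For (i), write $[F,G]=[f,g]\cdot[\varphi,\psi]$ in canonical representation. The given inclusion $[F,G]\subset[u,v]$ shows $\{k(x):k\in[F,G]\}\subset[u(x),v(x)]$, and by the previous paragraph this value set is exactly $[F(x),G(x)]$, whence $u(x)\le F(x)\le G(x)\le v(x)$. For the reverse inclusion, the extrema defining $u(x),v(x)$ are attained, say $u(x)=a_0b_0$ and $v(x)=a_1b_1$ with $a_i\in[f(x),g(x)]$ and $b_i\in[\varphi(x),\psi(x)]$. Applying the prescribed-value insertion twice, I would choose $h\in[f,g]$ with $h(x)=a_0$ and $\eta\in[\varphi,\psi]$ with $\eta(x)=b_0$; then $h\eta\in[F,G]$ and $(h\eta)(x)=u(x)$, so $u(x)\in[F(x),G(x)]$, and likewise $v(x)\in[F(x),G(x)]$. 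Hence $[u(x),v(x)]\subset[F(x),G(x)]$, giving equality, which is precisely $[F(x),G(x)]=[f(x),g(x)]\cdot[\varphi(x),\psi(x)]$.

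Part (ii) is then immediate: (i) gives $F=u$ and $G=v$ pointwise, and since the general identity for intervals yields $[u,v]=[u^\vee,v^\wedge]$, the product equals $[u^\vee,v^\wedge]$. For (iii) I would use that $ab$ is monotone in each variable separately, so on the rectangle $[f(x),g(x)]\times[\varphi(x),\psi(x)]$ the extrema occur at corners; explicitly $u=\min\{f\varphi,f\psi,g\varphi,g\psi\}$ and $v=\max\{f\varphi,f\psi,g\varphi,g\psi\}$ pointwise. When $f,g,\varphi,\psi$ are continuous, these four products are continuous, and the pointwise minimum and maximum of finitely many continuous functions are continuous, so $u,v\in C(K)$.

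The one genuinely nontrivial point---the main obstacle---is the reverse inclusion in (i): realizing the extreme boundary values $u(x)$ and $v(x)$ as values of continuous functions lying in the product. This is exactly what the prescribed-value insertion delivers, and the care required lies in checking that the single-point modifications $P',Q'$ preserve semicontinuity and the ordering $P'\le Q'$. The remaining two parts are formal consequences, part (iii) needing only the elementary corner formula for the extrema of a bilinear form on a rectangle.
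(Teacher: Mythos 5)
Your proof is correct. For parts (i) and (ii) you follow essentially the same route as the paper: the engine in both is that a canonical interval $[P,Q]$ contains, for each $x$ and each $c\in[P(x),Q(x)]$, a continuous function taking the value $c$ at $x$. The paper simply cites this as Lemma 4.2 of \cite{MS15}, whereas you reprove it via the one-point modification of $P$ and $Q$ followed by Kat\v{e}tov--Tong; your check that the modified functions stay semicontinuous (superlevel sets change at most by the closed singleton $\{x_0\}$) and ordered is sound, so this makes the lemma self-contained. Your deduction of (ii) from (i) --- giving $F=u$ and $G=v$ pointwise and then invoking $[u,v]=[u^\vee,v^\wedge]$ --- is a mild repackaging of the paper's direct argument, which shows $u^\vee\le F$ and $u\ge F$ and concludes $F=u^\vee$; both are valid, and yours in fact yields the slightly stronger pointwise identity $F=u$. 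The only genuinely different step is (iii): the paper argues via continuity, in the Hausdorff metric, of the rectangle-valued map $x\mapsto[f(x),g(x)]\times[\varphi(x),\psi(x)]$ composed with the continuous product map, whereas you use that a function affine in each variable attains its extrema on a rectangle at the vertices, giving the explicit formulas $u=\min\{f\varphi,f\psi,g\varphi,g\psi\}$ and $v=\max\{f\varphi,f\psi,g\varphi,g\psi\}$. Your version is more elementary and produces closed-form expressions for $u$ and $v$; the paper's is shorter but leans on standard facts about Hausdorff continuity of set-valued maps.
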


\begin{proof} First we remark that for an interval $[f,g]$ in canonical representation and for $x\in K$ and $\lambda\in\R$ with $f(x)\le\lambda\le g(x)$, there exists a function $h\in[f,g]$ with $h(x)=\lambda$; see  \cite{MS15}, Lemma 4.2. Assertion (i) follows immediately from this remark.

To prove (ii), let $f,g,\varphi,\psi,u,v$ be as in the lemma. Assume that the product $P:=[f,g]\cdot[\varphi, \psi]$ is an interval. It is nonempty and hence has a unique canonical representation $P= [F,G]$. For each $H\in P$ we have $u\le H\le v$ and hence also
$$ u^\vee \le H\le v^\wedge.$$
Since $[F,G]$ is in canonical representation, we have $F=\inf\, [F,G]$ and $G=\sup\,[F,G]$ (see \cite{MS15}, p. 357) and hence $u^\vee\le F$ and $G\le v^\wedge$. Since $u(x)$ is an attained minimum, there are numbers $a,b$ with $f(x)\le a\le g(x)$, $\varphi(x)\le b\le \psi(x)$ and $ab=u(x)$. By  the initial remark, there are functions $h\in[f,g]$ and $\eta\in[\varphi,\psi]$ such that $h(x)=a$ and $\eta(x)=b$. Then $h\eta\in P=[F,G]$ and $(h\eta)(x)=ab=u(x)$. It follows that $u(x)\ge F(x)$. We conclude that $u\ge F$, hence $u^\vee\ge F^\vee = F$ and thus $u^\vee=F$. Simililary we obtain that $v^\wedge =G$. This proves (ii).

To prove (iii), let $f,g,\varphi,\psi$ be continuous. The mapping 
$$ x\mapsto [f(x),g(x)]\times[\varphi(x),\psi(x)]$$
from $K$ into the set of rectangles in $\R^2$ is continuous with respect to the Hausdorff metric on the space of nonempty compact subsets of $\R^2$. The product mapping $(a,b)\mapsto ab$ from $\R^2$ to $\R$ is continuous. Therefore, also the mapping $x\mapsto \min [f(x),g(x)]\cdot[\varphi(x),\psi(x)]$ is continuous. Similar for $\max$.
\end{proof}

We return to the question whether to the representation (\ref{2.1}) of the sum of two intervals in canonical representation there is a counterpart for the algebraic product. It turns out that such a counterpart exists only under very strong additional assumptions. First we prove the following.

\begin{thm}\label{Thm3.1}
The product of two signed intervals in $C(K)$ is always a signed interval.
\end{thm}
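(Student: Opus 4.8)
The plan is to reduce everything to the product of two \emph{positive} intervals and to show that this product is again a positive interval, given explicitly by multiplying the bounding functions. First I would record the elementary algebraic identities that, for any subsets $A,B\subset C(K)$, one has $(-A)\cdot B = A\cdot(-B) = -(A\cdot B)$ and $(-A)\cdot(-B)=A\cdot B$, together with the fact that the negative $-[f,g]=[-g,-f]$ of an interval is again an interval, positive precisely when $[f,g]$ is negative. By definition a signed interval is either positive or negative, so two signed intervals fall into four sign-combinations; in each I may replace every negative factor by its (positive) negative and carry along an overall sign. It therefore suffices to treat two positive intervals $[f,g],[\varphi,\psi]$ (with $f,\varphi>0$), after which the product in question is either their product (factors of equal sign) or its negative (opposite signs); in either case the outcome is signed, which is exactly the assertion.

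The core claim is that, for positive intervals in canonical representation,
\[
[f,g]\cdot[\varphi,\psi]=[f\varphi,\,g\psi].
\]
Pointwise, since $0<f(x)\le g(x)$ and $0<\varphi(x)\le\psi(x)$, the product of the real intervals $[f(x),g(x)]\cdot[\varphi(x),\psi(x)]$ is $[f(x)\varphi(x),\,g(x)\psi(x)]$, so the bounding functions are $u=f\varphi$ and $v=g\psi$; this yields the inclusion $[f,g]\cdot[\varphi,\psi]\subset[f\varphi,g\psi]$ at once. The substance is the reverse inclusion: given $H\in C(K)$ with $f\varphi\le H\le g\psi$, I must factor $H=h\eta$ with $h\in[f,g]$ and $\eta\in[\varphi,\psi]$.

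For the factorization I would seek a continuous multiplier $\eta$ with
\[
\max\{\varphi,\,H/g\}\le\eta\le\min\{\psi,\,H/f\},
\]
and then put $h:=H/\eta$. The pointwise inequality $\max\{\varphi,H/g\}\le\min\{\psi,H/f\}$ unwinds into the four conditions $\varphi\le\psi$, $f\varphi\le H$, $H\le g\psi$ and $f\le g$ (the last using $H>0$), all of which hold by hypothesis. Since $g,\psi$ are l.s.c.\ and, being positive and l.s.c.\ on the compact space $K$, bounded away from $0$, the reciprocal $1/g$ is bounded and u.s.c., so $H/g$ is u.s.c.\ and the lower bound $\max\{\varphi,H/g\}$ is u.s.c.; dually $H/f$ is l.s.c.\ (possibly unbounded, but harmlessly so, being capped by $\psi$) and the upper bound $\min\{\psi,H/f\}$ is l.s.c. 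Applying the nonemptiness criterion of \cite{MS15} to the interval with these bounds, and noting that an u.s.c.\ function equals its upper semicontinuous envelope while an l.s.c.\ function equals its lower semicontinuous envelope, the criterion reduces to the pointwise inequality just checked; hence this interval is nonempty and contains a continuous $\eta$. Then $\eta\in[\varphi,\psi]$, and $\eta$ is continuous and everywhere positive, hence bounded away from $0$ on $K$, so $h=H/\eta$ is continuous; the bounds on $\eta$ give $f\le h\le g$, and $h\eta=H$, as required.

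The main obstacle is precisely this insertion step: producing a genuinely continuous factor $\eta$ squeezed between a merely upper semicontinuous lower bound and a merely lower semicontinuous upper bound. This is where compactness (hence normality) of $K$ enters, through the nonemptiness criterion, and where one must take care that passing to reciprocals interchanges the two kinds of semicontinuity, and that the relevant bounds stay bounded even though $f$ or $\varphi$ may have infimum $0$. Once the factorization $[f,g]\cdot[\varphi,\psi]=[f\varphi,g\psi]$ is in hand, the return to the general signed case is purely formal sign bookkeeping.
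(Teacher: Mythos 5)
Your proposal is correct and follows essentially the same route as the paper: reduce to the case of two positive intervals by sign bookkeeping, identify the product there as $[f\varphi,g\psi]$, and obtain the nontrivial inclusion by inserting a continuous function between an u.s.c.\ lower bound and an l.s.c.\ upper bound (the paper inserts the first factor $h$ between $\max\{f,H/\psi\}$ and $\min\{g,H/\varphi\}$ and cites Kat\v{e}tov--Tong directly, while you symmetrically insert the second factor $\eta$ and invoke the nonemptiness criterion from \cite{MS15}, which rests on the same insertion theorem). The only differences are this symmetric swap of roles and your (correct) extra remark that a positive l.s.c.\ function on a compact space is bounded away from zero; neither changes the substance of the argument.
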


\begin{proof}
Let $[f,g],[\varphi,\psi]$ be intervals in $C(K)$, without loss of generality in canonical representation. We shall prove the following:
\begin{eqnarray}
&& \mbox{if}  \quad f> 0,  \, \varphi> 0,\quad \mbox{then}\quad [f,g]\cdot[\varphi,\psi] = [f\varphi, g\psi], \label{3.1}\\
&& \mbox{if}  \quad f>0,\, \psi< 0,\quad \mbox{then}  \quad [f,g]\cdot[\varphi,\psi] = [g\varphi, f\psi],\label{3.2}\\
&& \mbox{if}  \quad g< 0,\, \psi< 0,\quad \mbox{then}  \quad [f,g]\cdot[\varphi,\psi] = [g\psi, f\varphi],\label{3.3}\\
&& \mbox{if}  \quad g< 0, \,\varphi > 0,\quad \mbox{then}  \quad [f,g]\cdot[\varphi,\psi] = [f\psi, g\varphi].\label{3.4}
\end{eqnarray}

Suppose, first, that $f>0$, $\varphi> 0$. The relation $[f,g]\cdot[\varphi,\psi]\subset [f \varphi, g \psi]$ is trivial. To prove the reverse inclusion, assume that $H\in [f \varphi, g \psi]$. Let
$$ \lambda =\max\{f,H/\psi\},\quad \mu=\min\{g,H/\varphi\}.$$
Since $\psi$ is l.s.c. and $H$ is continuous, the function $H/\psi$ is u.s.c. Since also $f$ is u.s.c., the function $\lambda$ is u.s.c. Similarly, $\mu$ is l.s.c. Moreover, $f\le g$, $f\le H/\varphi$, $H/\psi\le g$, $H/\psi\le H/\varphi$, hence $\lambda\le \mu$. By the Kat\v{e}tov--Tong insertion theorem (\cite{Kat51}, \cite{Ton52}, or \cite{Eng89}, p. 61), there exists a function $h\in[\lambda,\mu]$. Then $f\le h\le g$, and the function $\eta:= H/h$ (which can be defined since $f>0$ implies $h>0$) satisfies $\varphi\le \eta\le\psi$, thus
$$ H= h\eta\in[f,g]\cdot[\varphi,\psi].$$
This completes the proof of (\ref{3.1}).

Relations (\ref{3.2})--(\ref{3.4}) follow from (\ref{3.1}). For instance, in the case of (\ref{3.2}) we have
$$ [f,g]\cdot[\varphi,\psi]= -[f,g]\cdot[-\psi,-\varphi]= -[-f\psi,-g\varphi]= [g\varphi,f\psi].$$
Similarly, the remaining cases (\ref{3.3}), (\ref{3.4}) are settled.
\end{proof}

\section{Properties of products of intervals}\label{sec4}

One might be inclined to think that, for example, in (\ref{3.1}) the positive intervals could be replaced by nonnegative intervals. This, however, is generally not possible. Of course, if the space $K$ is discrete (and thus finite, since $K$ is compact), then for any two intervals $[f,g]$, $[\varphi,\psi]$ we have $[f,g]\cdot [\varphi,\psi]=[u,v]$ (with $u,v$ as in Lemma \ref{Lem3.1}), since any $H\in [u,v]$ can be written, for each $x\in K$, in the form $H(x)=h(x)\eta(x)$ with $h(x)\in [f(x),g(x)]$ and $\eta(x)\in[\varphi(x),\psi(x)]$, and every real function on $K$ is continuous. Under a restriction on the space $K$, we can reverse this observation. As it turns out, none of the inequalities $f> 0$, $\varphi> 0$ can be relaxed in general, even if the functions $f,g,\varphi,\psi$ are continuous, and even if we do not require the full interval property of the product, but only closedness.

\begin{thm}\label{Thm3.2}
Assume that $K$ is first-countable. If in $C(K)$ the product of a positive and a nonnegative interval, both continuous, 
is always closed, then $K$ is finite.
\end{thm}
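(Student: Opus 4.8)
The plan is to establish the contrapositive: if $K$ is first-countable and \emph{infinite}, then some positive continuous interval and some nonnegative continuous interval have a product that is not closed. Being an infinite compact Hausdorff space, $K$ has an accumulation point $x_0$, and first-countability produces distinct points $x_n\to x_0$ with $x_n\neq x_0$; write $S=\{x_0\}\cup\{x_n:n\ge 1\}$. I take the positive interval to be $[\bar 1,\bar 2]$ and build the nonnegative interval $[\varphi,\psi]$, using normality of $K$ and Tietze's extension theorem, from continuous functions $0\le\varphi\le\psi$ with $\varphi(x_0)=\psi(x_0)=0$ and $0<a_n:=\varphi(x_n)<b_n:=\psi(x_n)\to 0$. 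The decisive feature is that $\psi$ \emph{vanishes} at the accumulation point: if instead $\psi>0$ everywhere, then for any $H\in[f\varphi,g\psi]$ the function $h:=\max(f,H/\psi)$ would be a continuous factor (with $\eta:=H/h$), so the product would equal $[f\varphi,g\psi]$ and be closed. Since $f>0$ and $\varphi\ge 0$, the bounding functions are $u=f\varphi$ and $v=g\psi$ (continuous, cf. Lemma~\ref{Lem3.1}(iii)), so $P:=[\bar 1,\bar 2]\cdot[\varphi,\psi]\subset[\varphi,2\psi]$; I will exhibit a point of $[\varphi,2\psi]$ lying in $\overline P\setminus P$.

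I then introduce the obstructing element $H\in[\varphi,2\psi]$ that attains, along the sequence, the \emph{maximal} value on even indices and the \emph{minimal} value on odd indices: $H(x_n)=2b_n$ for even $n$, $H(x_n)=a_n$ for odd $n$, and $H(x_0)=0$. Concretely $H=\varphi+\Delta$ with $\Delta\ge 0$ continuous, supported in small pairwise disjoint neighbourhoods of the even points (disjoint from $x_0$ and from the odd points), and $\Delta(x_n)=2b_n-a_n=(2\psi-\varphi)(x_n)$ there; this keeps $\varphi\le H\le 2\psi$. To see $H\notin P$, suppose $H=h\eta$ with $\bar 1\le h\le\bar 2$ and $\varphi\le\eta\le\psi$. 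At an even $x_n$, $\eta(x_n)\le b_n$ forces $h(x_n)=2b_n/\eta(x_n)\ge 2$, hence $h(x_n)=2$; at an odd $x_n$, $\eta(x_n)\ge a_n$ forces $h(x_n)=a_n/\eta(x_n)\le 1$, hence $h(x_n)=1$. Thus $h(x_n)$ has the two subsequential limits $1$ and $2$, contradicting continuity of $h$ at $x_0$. So $H\notin P$, and in particular (\ref{3.1}) already fails when positivity of $\varphi$ is weakened to $\varphi\ge 0$.

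The main work, and the expected obstacle, is to prove $H\in\overline P$, so that $P$ is not closed. I approximate $H$ by honest products $H_k=h_k\eta_k$ that switch off all but the first $k$ even bumps. On the neighbourhood of each even $x_n$ with $n\le k$ set $h_k:=\min(2,H/\varphi)$ (continuous there since $\varphi>0$, valued in $[1,2]$ since $H\ge\varphi$, equal to $2$ at $x_n$, and tending to $1$ at the boundary where $\Delta\to 0$), and $h_k:=1$ elsewhere; this yields a continuous $h_k\in[\bar 1,\bar 2]$. Put $\eta_k:=\max(\varphi,\min(\psi,H/h_k))\in[\varphi,\psi]$, continuous since $h_k\ge 1$, so $H_k:=h_k\eta_k\in P$. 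A routine case analysis shows $H_k=H$ on the $k$ activated even neighbourhoods and off the support of $\Delta$, while on an even neighbourhood with $n>k$ one has $h_k=1$ and the only discrepancy is $H_k=\psi$ where $H>\psi$, with error at most $2b_n$ for small enough neighbourhoods. Hence $\|H_k-H\|_\infty\le 2\sup_{n>k}b_n\to 0$, giving $H_k\to H$, so $H\in\overline P\setminus P$; thus $P$ is not closed, the hypothesis is violated, and $K$ must be finite. The delicate part is precisely this last step: the factors must be produced on all of $K$—not merely on $S$—as genuine elements of $[\bar 1,\bar 2]$ and $[\varphi,\psi]$, and the uniform error controlled globally, which is why $H$ is arranged to coincide with $\varphi$ off the disjoint even bumps and each factor is defined by explicit clamping.
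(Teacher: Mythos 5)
Your proof is correct, and its engine is the same as the paper's: a non-isolated point $x_0$ with an injective sequence converging to it, the positive interval $[\bar 1,\bar 2]$, a nonnegative interval whose functions vanish at $x_0$, and a continuous $H$ in the closure of the product whose every factorization $H=h\eta$ forces $h$ to oscillate between values near $1$ and values near $2$ along the sequence, contradicting continuity of $h$ at $x_0$. Where you differ is in the realization. The paper takes the second factor to be the \emph{degenerate} interval $[\varphi,\varphi]$ with $\varphi(y_n)=1/n$, $\varphi(x_0)=0$: then the factorization $H=h\varphi$ is unique wherever $\varphi\neq 0$, so non-membership is immediate, and the approximants are simply $p_m\varphi$ for a telescoping sequence of continuous $p_m\in[\bar 1,\bar 2]$ that stabilize off a neighbourhood base of $x_0$. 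Your non-degenerate $[\varphi,\psi]$ forces you to pin down $h(x_n)$ by placing $H(x_n)$ at the extreme ends of the pointwise product range ($2b_n$ at even $n$, $a_n$ at odd $n$), and to build approximants by bump functions and clamping; this is more work and buys nothing for the theorem as stated (the degenerate interval is a perfectly good continuous nonnegative interval), though it does show the failure is not an artifact of degeneracy. Two points you should make explicit: the continuity of $\Delta$ (equivalently of $H$) at $x_0$ is not automatic from ``supported in disjoint bumps'' but follows from the squeeze $0\le\Delta\le 2\psi-\varphi$ with $\psi(x_0)=0$ --- exactly the mechanism the paper uses to see that $p\varphi$ is continuous although $p$ is not; and the existence of pairwise disjoint open neighbourhoods of the even $x_n$, each avoiding $x_0$ and the odd points, needs a word (e.g.\ choose a decreasing base $(U_n)$ with $\overline{U_{n+1}}\subset U_n$ and $x_n\in U_n\setminus U_{n+1}$, and use the open sets $U_n\setminus\overline{U_{n+2}}$).
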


\begin{proof}
Assume that the assumptions are satisfied. Suppose that $K$ is not finite. Then (since $K$ is compact) there is a point $x_0\in K$ that is not an isolated point, that is, every neighbourhood of $x_0$ contains a point different from $x_0$. Since $K$ is a first-countable Hausdorff space, there is in $K$ a decreasing sequence $(U_n)_{n\in{\mathbb N}}$ of open sets forming a neighbourhood basis of $x_0$, and also a sequence $(y_n)_{n\in{\mathbb N}}$ of points satisfying $y_n \in U_n\setminus U_{n+1}$ for $n\in{\mathbb N}$. Then the sequence $(y_n)_{n\in{\mathbb N}}$ is injective and converges to $x_0$.

By induction, we show the existence of a sequence $(p_n)_{n\in{\mathbb N}}$ of continuous functions $p_n:K\to [1,2]$ with special properties. First, we set $p_1\equiv 1$. If for the number $n\in{\mathbb N}$ the continuous function $p_n:K\to[1,2]$ has already been defined, we use the Tietze extension theorem to find a continuous function $p_{n+1}:K\to [1,2]$  with
$$ p_{n+1}(y_{n+1})= \left\{ \begin{array}{ll} 1, & \mbox{if $n+1$ is odd},\\[2mm]  2, & \mbox{if $n+1$ is even,}\end{array}\right.\quad p_{n+1}(x)=p_n(x)\quad \mbox{for }x\in K\setminus U_{n+1}.$$
After this inductive process, to each $x\in K\setminus\{x_0\}$ there is some $m\in{\mathbb N}$ such that $x\notin U_m$ and therefore $p_k(x)=p_m(x)$ for $k\ge m$. We define $p(x)= p_m(x)$ and put $p(x_0)=0$. Summarizing, we have:

The function $p:K\to[1,2]$ is continuous on $K\setminus \{x_0\}$, and it satisfies 
$$ p(y_n)= \left\{ \begin{array}{ll} 1, & \mbox{if $n$ is odd},\\[2mm]  2, & \mbox{if $n$ is even.}\end{array}\right.$$
Next, we define
$$ \varphi(x_0)=0,\quad \varphi(y_n)= 1/n\quad\mbox{for }n\in{\mathbb N},$$
and extend this to a continuous function $\varphi:K\to[0,1]$. Further, we define $f,g:K\to\R$ by $f=\bar 1$ and $g=\bar 2$.

The function
$$ H=p\varphi$$
is continuous on $K$, since $p$ is continuous on $K\setminus\{x_0\}$, $\varphi$ is continuous on $K$, the function $p$ is bounded, and $\varphi(x_0)=0$. We claim that $H$ is in the closure of the product $[f,g]\cdot[\varphi,\varphi]$. To prove this, let $\varepsilon>0$ be given. We can choose a number $m\in{\mathbb N}$ with $\varphi(x)\le\varepsilon$ for $x\in U_m$. Since $p_m\in [f,g]$, the function $H_m = p_m\varphi$ satisfies $H_m\in [f,g]\cdot[\varphi,\varphi]$. Further, we have $H_m(x)=H(x)$ for $x\in K\setminus U_m$, while for $x\in U_m$ we get 
$$|H_m(x)-H(x)|= |p_m(x)-p(x)|\varphi(x)\le \varepsilon.$$ 
Thus, $\|H_m-H\|_\infty \le\varepsilon$. Since $\varepsilon>0$ was arbitrary, this shows that $H$ is in the closure of $[f,g] \cdot[\varphi, \varphi]$. Since by assumption the latter is closed, we have $H\in [f,g]\cdot[\varphi,\varphi]$, hence there is a function $h\in [f,g]$ with $H=h\varphi$. This yields $h(x)=p(x)$ whenever $\varphi(x)\not=0$, in particular $h(y_n)=1$ for odd $n$ and $h(y_n)=2$ for even $n$, which contradicts the continuity of $h$ at $x_0$. 
\end{proof}

In Theorem \ref{Thm3.2} (and in later theorems), first countability of $K$ is assumed in order to ensure that
for every $A\subset K$ and every $x\in\overline A$, there exists a sequence in $A$ that converges to $x$. That some assumption of this kind cannot be avoided, can be seen from the following. 

Let $\mathbb N$ denote the discrete space of natural numbers (it could be replaced by any infinite discrete space), and let $\beta\mathbb N$ be the Stone--\v{C}ech compactification of $\mathbb N$ (see, e.g., \cite{Car05}). For $K=\beta\mathbb N$, the proof of Theorem \ref{Thm3.2} breaks down, since $\beta\mathbb N$ does not contain a nontrivial convergent sequence (see, e.g., Corollary 3.6.15 in \cite{Eng89}). And indeed, in $C(\beta\mathbb N)$ the product of two intervals defined by continuous functions is always an interval. To show this, we use the fact that there is an order-preserving linear isometry $\Phi:\ell^{\infty}(\mathbb N)\to C(\beta\mathbb N)$, where $\ell^{\infty}(\mathbb N)$ is the Banach space of all bounded real functions on $\mathbb N$ with the supremum  norm. 

\begin{prop}\label{P1}
In $C(\beta\mathbb N)$, the product of any two continuous intervals is an interval.
\end{prop}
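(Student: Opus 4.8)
The plan is to transfer the question about products of intervals in $C(\beta\mathbb N)$ to a question about products of intervals in $\ell^\infty(\mathbb N)$, where the discrete structure makes everything computable coordinatewise. First I would make precise the correspondence induced by the isometry $\Phi\colon\ell^\infty(\mathbb N)\to C(\beta\mathbb N)$. Since $\Phi$ is an order-preserving linear isometry, it maps intervals to intervals: for bounded $f,g\colon\mathbb N\to\R$ we have $\Phi([f,g])=[\Phi(f),\Phi(g)]$, where the left-hand interval is taken in $\ell^\infty(\mathbb N)$. The crucial extra ingredient is that $\Phi$ is \emph{multiplicative}. This is because $\Phi$ is, up to the standard identification, the map sending a bounded sequence to its unique continuous extension to $\beta\mathbb N$, and the continuous extension of a pointwise product is the product of the continuous extensions (both are continuous functions on $\beta\mathbb N$ agreeing on the dense subset $\mathbb N$, hence equal). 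I would state and verify $\Phi(f)\Phi(g)=\Phi(fg)$ as a first step, since it is what lets products pass through $\Phi$.

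Granting multiplicativity, the reduction is immediate: if $[f,g],[\varphi,\psi]$ are continuous intervals in $C(\beta\mathbb N)$, then since $\Phi$ is a surjective isometry there are bounded sequences with $[f,g]=\Phi([\tilde f,\tilde g])$ and $[\varphi,\psi]=\Phi([\tilde\varphi,\tilde\psi])$, and
$$
[f,g]\cdot[\varphi,\psi]=\Phi\bigl([\tilde f,\tilde g]\bigr)\cdot\Phi\bigl([\tilde\varphi,\tilde\psi]\bigr)=\Phi\bigl([\tilde f,\tilde g]\cdot[\tilde\varphi,\tilde\psi]\bigr).
$$
So it suffices to show that the product of two intervals in $\ell^\infty(\mathbb N)$ is again an interval, and then apply $\Phi$ (which carries intervals in $\ell^\infty$ to intervals in $C(\beta\mathbb N)$).

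For the core claim in $\ell^\infty(\mathbb N)$, I would argue coordinatewise exactly as in the discrete finite case discussed before Theorem~\ref{Thm3.2}. Given intervals $[\tilde f,\tilde g]$ and $[\tilde\varphi,\tilde\psi]$ in $\ell^\infty$, define the bounding sequences $u,v\colon\mathbb N\to\R$ by $u(n)=\min[\tilde f(n),\tilde g(n)]\cdot[\tilde\varphi(n),\tilde\psi(n)]$ and $v(n)=\max$ of the same set, using the real-interval product fact from Section~\ref{sec2}. These are bounded, so $[u,v]$ is an interval in $\ell^\infty$. The inclusion $[\tilde f,\tilde g]\cdot[\tilde\varphi,\tilde\psi]\subset[u,v]$ is trivial. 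For the reverse inclusion, take any $H\in[u,v]$; for each $n$ we have $H(n)\in[u(n),v(n)]=[\tilde f(n),\tilde g(n)]\cdot[\tilde\varphi(n),\tilde\psi(n)]$, so by that same fact there are $h(n)\in[\tilde f(n),\tilde g(n)]$ and $\eta(n)\in[\tilde\varphi(n),\tilde\psi(n)]$ with $H(n)=h(n)\eta(n)$. Since $\mathbb N$ is discrete every sequence lies in $\ell^\infty(\mathbb N)=C(\mathbb N)$ provided it is bounded, and boundedness of $h,\eta$ is automatic from the bounds on $\tilde f,\tilde g,\tilde\varphi,\tilde\psi$. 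Hence $h\in[\tilde f,\tilde g]$, $\eta\in[\tilde\varphi,\tilde\psi]$, and $H=h\eta$, giving $[u,v]\subset[\tilde f,\tilde g]\cdot[\tilde\varphi,\tilde\psi]$.

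I expect the main obstacle, or at least the only point that deserves genuine care, to be establishing multiplicativity of $\Phi$ rigorously rather than citing it; everything downstream is mechanical. The identity $\Phi(f)\Phi(g)=\Phi(fg)$ is true because $\Phi$ realizes $\ell^\infty(\mathbb N)$ as $C(\beta\mathbb N)$ via the universal property of the Stone--\v Cech compactification, but if the paper has only introduced $\Phi$ as an order-preserving linear isometry, I would either add a sentence identifying $\Phi$ with the canonical continuous-extension isomorphism (which is simultaneously a lattice and an algebra isomorphism), or observe that an order-preserving linear isometry between these particular $C(K)$-type spaces automatically preserves the product, since the multiplicative structure is determined by the order and norm through $fg=\tfrac14\bigl((f+g)^2-(f-g)^2\bigr)$ together with the characterization of squares as the positive elements whose square root is determined by the lattice operations. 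The cleanest route is simply to note that under the identification of $\ell^\infty(\mathbb N)$ with $C(\beta\mathbb N)$, $\Phi$ is the restriction-then-extension map, whose multiplicativity is the density argument above.
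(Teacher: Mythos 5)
Your proposal is correct and is essentially the paper's own argument in different packaging: the paper also solves the problem coordinatewise on the dense copy of $\mathbb N$ using the real-interval product fact and then extends $\tilde h,\tilde\eta$ via $\Phi$, with the order and product relations persisting to $\beta\mathbb N$ by density and continuity (which is exactly your multiplicativity of $\Phi$). The only cosmetic difference is that you transfer the entire problem to $\ell^\infty(\mathbb N)$ up front rather than working with a single $H\in[u,v]$ in $C(\beta\mathbb N)$; both routes are sound.
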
 

\begin{proof}
The Stone--\v{C}ech compactification $\beta\mathbb N$ contains a dense subspace homeomorphic to $\mathbb N$, and we can assume that $\mathbb N$ is already a dense subspace of $\beta\mathbb N$. The linear isometry $\Phi$ is then the extension of a function in $\ell^\infty(\mathbb N)$ to a function in $C(\beta\mathbb N)$.

Let $[f,g]$, $[\varphi,\psi]$ be intervals in $C(\beta\mathbb N)$, where $f,g,\varphi,\psi$ are continuous, and let $(u,v)$ be the pair of bounding functions corresponding to the quadruple $(f,g,\varphi,\psi)$. According to Lemma \ref{Lem3.1}, the functions $u,v$ are also continuous. We state that
\begin{equation}\label{SC}
[f,g]\cdot[\varphi,\psi]=[u,v].
\end{equation}
The inclusion $[f,g]\cdot[\varphi,\psi]\subset [u,v]$ is clear. 

Let $H\in [u,v]$. For each $n\in\mathbb N$ we have
$$ [f(n),g(n)]\cdot[\varphi(n),\psi(n)] =[u(n),v(n)],$$
by the definition of $u,v$. Since $H(n)\in[u(n),v(n)]$, there are numbers $\tilde h(n)\in[f(n),g(n)]$ and $\tilde\eta(n)\in[\varphi(n),\psi(n)]$ such that
$$ \tilde h(n)\tilde\eta(n)=H(n).$$
The functions $\tilde h,\tilde\eta$ thus defined on $\mathbb N$ are bounded and hence elements of $\ell^\infty(\mathbb N)$. Let $h=\Phi(\tilde h)$, $\eta=\Phi(\tilde \eta)$ be their extensions to $C(\beta\mathbb N)$. Since on the dense subspace $\mathbb N$ the continuous functions $f,g,\varphi,\psi,h,\eta$ satisfy $f\le h\le g$, $\varphi\le\eta\le\psi$ and $H=h\eta$, they satisfy the same relations on $\beta\mathbb N$. This proves that $h\in [f,g]$, $\eta\in[\varphi,\psi]$, $H=h\eta$ on $\beta\mathbb N$ and thus $H\in[f,g]\cdot[\varphi,\psi]$, which completes the proof of (\ref{SC}).
\end{proof}

In contrast, in $C(\alpha{\mathbb N})$, where $\alpha{\mathbb N}$ denotes the one point compactification of $\mathbb N$, the product of two continuous intervals need not be an interval. This follows from Theorem \ref{Thm3.2}, since $\alpha\mathbb N$ is a first-countable compact Hausdorff space.

Intervals are closed and convex. As the product of two intervals $[f,g]$, $[\varphi,\psi]$ need not be closed, even if they are nonnegative, it is a natural question whether it must at least be convex. That this is not a trivial question, can be seen from Theorem \ref{Thm3.3}, where the nonnegativity assumption is deleted and convexity does not hold generally. With the nonnegativity assumption, however, convexity can be proved.

\begin{thm}\label{Thm3.2a}
The product of two nonnegative intervals in $C(K)$ is always convex.
\end{thm}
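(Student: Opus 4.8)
The plan is to work with canonical representations $[f,g]$, $[\varphi,\psi]$ having $f\ge 0$, $\varphi\ge 0$, and to prove convexity in its primitive form: given two products $H_0=h_0\eta_0$ and $H_1=h_1\eta_1$ (with $h_i\in[f,g]$, $\eta_i\in[\varphi,\psi]$) and $\lambda\in[0,1]$, I would exhibit continuous $h\in[f,g]$ and $\eta\in[\varphi,\psi]$ with $h\eta=H:=(1-\lambda)H_0+\lambda H_1$. Since everything is nonnegative, $H$ lies pointwise in the product interval $[f(x)\varphi(x),g(x)\psi(x)]$, so the whole difficulty is to factor $H$ \emph{continuously}; the failure of continuous factorizability is exactly what makes the product non-closed in Theorem \ref{Thm3.2}, so a limiting/approximation argument via positive intervals cannot work here, and an explicit factorization is needed.

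First I would record two natural factorizations built from weighted averages. Put $\rho=(1-\lambda)\eta_0+\lambda\eta_1$ and $\sigma=(1-\lambda)h_0+\lambda h_1$; these are continuous, with $\rho\in[\varphi,\psi]$ and $\sigma\in[f,g]$ by convexity of the intervals. On the open set $\{\rho>0\}$ the pair $(h_A,\eta_A):=(H/\rho,\,\rho)$ is a valid factorization, because $H/\rho$ is the convex combination of $h_0,h_1$ with weights $(1-\lambda)\eta_0/\rho$ and $\lambda\eta_1/\rho$ and hence lies in $[f,g]$. Symmetrically, on $\{\sigma>0\}$ the pair $(h_B,\eta_B):=(\sigma,\,H/\sigma)$ works, since $H/\sigma$ is a convex combination of $\eta_0,\eta_1$. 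These two constructions coincide only where $(h_0-h_1)(\eta_0-\eta_1)=0$ (one computes $\rho\sigma=H-\lambda(1-\lambda)(h_0-h_1)(\eta_0-\eta_1)$), so they cannot simply be glued over an open cover; worse, each degenerates on the boundary of its domain, where the freed factor becomes a $0/0$ weighted average that may oscillate.

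To reconcile them I would interpolate \emph{geometrically}. Setting $\theta=\sigma/(\rho+\sigma)$ on $\{\rho+\sigma>0\}$ and
$$ h=h_A^{1-\theta}h_B^{\theta},\qquad \eta=\eta_A^{1-\theta}\eta_B^{\theta},$$
the identity $h_A\eta_A=h_B\eta_B=H$ yields $h\eta=H^{1-\theta}H^{\theta}=H$ exactly, while a geometric mean of two numbers in $[f,g]$ (resp. $[\varphi,\psi]$) again lies in that interval, so $h\in[f,g]$ and $\eta\in[\varphi,\psi]$. On the piece $\{\rho=0<\sigma\}$ I would simply set $(h,\eta)=(h_B,\eta_B)$, on $\{\sigma=0<\rho\}$ set $(h,\eta)=(h_A,\eta_A)$, and on the remaining set $Z=\{\rho=0=\sigma\}$, where necessarily $f=\varphi=0$ and $H=0$, set $h=\eta=0$.

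The main obstacle is continuity, and it is concentrated entirely on the zero sets. On $\{\rho>0,\sigma>0\}$ all expressions are manifestly continuous. As $\rho\to 0$ with $\sigma>0$ one has $\theta\to 1$, and controlling $\rho^{1-\theta}$ through $t\ln t\to 0$ I expect $(h,\eta)\to(h_B,\eta_B)$; the case $\sigma\to 0$, $\rho>0$ is symmetric. The delicate point is the approach to $Z$, where $\theta$ has no limit: here the elementary inequality $a^{1-\theta}b^{\theta}\le\max\{a,b\}$ squeezes both $h$ and $\eta$ to $0$ (since $h_A,h_B,\eta_A,\eta_B$ all tend to $0$ as $\rho,\sigma\to 0$), which is precisely what makes the extension by $0$ continuous. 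Carrying out these limit computations cleanly—in particular ruling out oscillation when only some of $h_0,h_1,\eta_0,\eta_1$ vanish along the boundary—is where the real work lies, and is the step I expect to demand the most care.
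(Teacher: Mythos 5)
Your two factorizations $(h_A,\eta_A)=(H/\rho,\rho)$ and $(h_B,\eta_B)=(\sigma,H/\sigma)$, together with the weight $\theta=\sigma/(\rho+\sigma)$, are exactly the points $A$, $B$ and the convex weights used in the paper's proof, so the skeleton of your argument is the right one. The gap is in the interpolation step: the geometric mean $h=h_A^{1-\theta}h_B^{\theta}$ is \emph{not} continuous up to the set $\{\rho=0<\sigma\}$. The danger is not $\rho^{1-\theta}$ (which your $t\ln t$ estimate does control) but $h_A^{1-\theta}$: the factor $h_A=H/\rho$ is a convex combination of $h_0,h_1$ with weights $(1-\lambda)\eta_0/\rho$ and $\lambda\eta_1/\rho$, so its rate of decay to $0$ is governed by the ratio $\eta_1/\eta_0$, which is independent of how fast $\rho\to 0$; hence $(1-\theta)\ln h_A\approx(\rho/\sigma)\ln h_A$ need not tend to $0$. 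Concretely, on $K=[0,1]$ take $f=\varphi=\bar 0$, $g=\psi=\bar 1$, $h_0=\bar 0$, $h_1=\bar 1$, $\eta_0(x)=x$, $\eta_1(x)=e^{-1/x}$ (with $\eta_1(0)=0$), $\lambda=1/2$. Then $\sigma\equiv 1/2$, $\rho\sim x/2$, $h_A\sim e^{-1/x}/x$, $1-\theta\sim x$, so $(1-\theta)\ln h_A\to -1$ and $h(x)\to e^{-1}/2$ as $x\to 0^{+}$, whereas your prescription assigns $h(0)=h_B(0)=1/2$. Letting $\eta_1$ oscillate between $e^{-1/x}$ and $x^{2}$ on interlaced scales even destroys the existence of the one-sided limit, so no choice of boundary value repairs the construction.

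The paper's way out is to interpolate \emph{arithmetically}: $C=(1-\theta)A+\theta B$ lies in the rectangle but not on the hyperbola $\{(c,\gamma):c\gamma=H\}$, and one then slides $C$ back onto the hyperbola along the direction $(1,1)$. Near $\{\rho=0<\sigma\}$ the arithmetic combination is insensitive to where $A$ sits, because $A$ stays in a bounded set and its weight $1-\theta$ tends to $0$; the final projection is harmless there because the hyperbola's parameter $H$ tends to $0$, so the hyperbola hugs the coordinate axes. Your construction has the virtue of landing on the hyperbola automatically, but it pays for this with the loss of continuity described above; if you want to salvage your scheme you must replace the geometric mean by an interpolation that forgets $A$ entirely, not merely exponentially, as its weight vanishes.
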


\begin{proof}
Let bounded functions $0\le f\le g$, $0\le\varphi\le\psi$ be given, and let $H_1,H_2\in[f,g] \cdot [\varphi,\psi]$. Thus, there are functions $h_1,h_2\in[f,g]$, $\eta_1,\eta_2\in[\varphi,\psi]$ such that $H_i=h_i\eta_i$ for $i=1,2$. Let $0<t<1$. We have to show that 
$$ H:=(1-t)H_1+t H_2=(1-t)h_1\eta_1+th_2\eta_2$$
satisfies
\begin{equation}\label{3.5n} 
H\in[f,g] \cdot [\varphi,\psi].
\end{equation}

To prepare this, let $M=\{(a,\alpha,b,\beta)\in \R^4: a,\alpha,b,\beta\ge 0\}$ and $z=(a,\alpha,b,\beta)\in M$. We consider $(a,\alpha)$ and $(b,\beta)$ as points in the Euclidean plane $\R^2$ with the standard basis. They are opposite vertices of a unique rectangle (possibly degenerate) with edges parallel to the coordinate axes. We denote this rectangle by ${\rm Rec}(z)$. Further, we define the hyperbola
$$ {\rm Hyp}(z) = \{(c,\gamma)\in\R^2:c\gamma=(1-t)a\alpha+tb\beta\}$$
(which may be degenerate). The function $p$ defined by
$$ p(\tau)=\big((1-\tau)a+\tau b\big)\cdot\big((1-\tau)\alpha+\tau \beta\big),\quad 0\le\tau\le 1,$$
satisfies $p(0)=a\alpha$, $p(1)=b\beta$, hence there is some $\tau_0\in [0,1]$ with $p(\tau_0)=(1-t)a\alpha+tb\beta$. Thus, the rectangle ${\rm Rec}(z)$ has a nonempty intersection with the hyperbola ${\rm Hyp}(z)$. We need to find a continuous map $P:M\to \R^2$ satisfying, for all $z\in M$,
\begin{equation*}
%\label{3.6n}  
P(z)\in {\rm Rec}(z)\cap {\rm Hyp}(z)\,.
\end{equation*}
Given such a map, we can proceed as follows. For $x\in K$, we define 
$$z(x)=(h_1(x),\eta_1(x),h_2(x),\eta_2(x))$$ 
%and
%$$ {\mathcal R}(x)= {\rm Rec}(z(x)),\qquad {\mathcal H}(x)= {\rm Hyp}(z(x)).$$
Then we define ${\mathcal P}(x)= P(z(x))$ for $x\in K$ and thus a mapping $\mathcal P: K\to \R^2$. It is continuous  and satisfies ${\mathcal P}(x)\in {\rm Rec}(z(x))\cap{\rm Hyp}(z(x))$. Hence, if we define functions $h,\eta$ on $K$ by ${\mathcal P}(x)=(h(x),\eta(x))$, then $h$ and $\eta$ are continuous, they satisfy $h(x)\eta(x)=H(x)$ since ${\mathcal P}(x)\in {\rm Hyp}(z(x))$, and $f\le h\le g$, $\varphi\le \eta\le \psi$ since ${\mathcal P}(x)\in{\rm Rec}(z(x))$ and the latter rectangle is contained in the rectangle ${\rm Rec}(f(x),g(x),\varphi(x),\psi(x))$. Thus, $h\in[f,g]$, $\eta\in [\varphi,\psi]$ and $h\eta=H$, which shows that $H\in [f,g]\cdot[\varphi,\psi]$ and thus proves (\ref{3.5n}).

We proceed to construct the mapping $P$. First, if $a=b=0$, we define
$$ P(z)= (0,(1-t)\alpha+t\beta).$$
If $\alpha=\beta=0$, we define
$$ P(z)= ((1-t)a+tb,0).$$

Now we assume that $\max\{a,b\}>0$ and $\max\{\alpha,\beta\}>0$. Then $(1-t)a+tb\not=0$ and $(1-t)\alpha+t\beta\not=0$, hence we can define
\begin{eqnarray}
A &=&\left(\frac{(1-t)a\alpha+tb\beta}{(1-t)\alpha+t\beta},\,(1-t)\alpha+t\beta\right), \label{3.7n} \\ 
B &=&\left((1-t)a+tb,\, \frac{(1-t)a\alpha+tb\beta}{(1-t)a+tb}\right).\label{3.8n}
\end{eqnarray}
The point $A$ belongs to ${\rm Hyp}(z)$ (trivially) and to ${\rm Rec}(z)$ (observe that its first coordinate is a convex combination of $a$ and $b$, and its second coordinate is a convex combination of $\alpha$ and $\beta$). Similarly, $B \in {\rm Hyp}(z)\cap{\rm Rec}(z)$. The point 
\begin{equation}\label{3.9n} 
C = \frac{((1-t)\alpha+t\beta)A+((1-t)a+tb)B}{(1-t)\alpha+t\beta+(1-t)a+tb},
\end{equation}
is a convex combination of $A$ and $B$ and hence belongs to ${\rm Rec}(z)$. Let $L$ be the line in $\R^2$ that is parallel to the vector $(1,1)$ and passes through $C$. Either the points $A$ and $B$ coincide, then $C$ is this point and hence lies on ${\rm Hyp}(z)$, or the points $A$ and $B$ lie on different sides of $L$. Since the part of ${\rm Hyp}(z)$ between $A$ and $B$ lies in ${\rm Rec}(z)$, the line $L$ intersects ${\rm Hyp}(z)$ in a unique point belonging to ${\rm Rec}(z)$. This point we define as $P(z)$.

We have to show that the mapping $P$ thus defined is continuous. Let $z=(a,\alpha,b,\beta)\in M$. Suppose, first, that $\max\{a,b\}>0$ and $\max\{\alpha,\beta\}>0$. Then there is a neighbourhood $V$ of $z$ in $M$ such that $\max\{a',b'\}>0$ and $\max\{\alpha',\beta'\}>0$ for all $z'=(a',\alpha',b',\beta')\in V$. Since $A,B,C$ defined above are continuous functions of $a,\alpha,b,\beta$ and also the intersection point of $L$ and ${\rm Hyp}$ defines a continuous function, the mapping $P$ is continuous at $z$. 

Suppose, second, that $z=(0,0,0,0)$. Then ${\rm Rec}(z)=\{(0,0)\}$ and $P(z) = (0,0)$. Let $U$ be a neighbourhood of $(0,0)$ in $\R^2$. There is a neighbourhood $V$ of $z$ in $M$ such that for all $z'\in M$ we have ${\rm Rec}(z')\subset U$ and hence $P(z')\in U$. Thus, $P$ is continuous at $z$.

Third, let $z=(a,\alpha,b,\beta)\in M$ belong to one of the remaining cases, that is, either $\alpha=\beta=0$, $\max\{a,b\}>0$, or $a=b=0$, $\max\{\alpha,\beta\}>0$. Suppose, first, that $\alpha=\beta=0$, $\max\{a,b\}>0$. By definition, $P(z)= ((1-t)a+tb,0)$. Let $U$ be a neighbourhood of $P(z)$ in $\R^2$. Let $z_i=(a_i,\alpha_i,b_i, \beta_i) \in M$, $i\in {\mathbb N}$, be such that $z_i\to z$ for $i\to\infty$. We may assume that $\max\{a_i,b_i\}>0$, since this holds for all sufficiently large $i$. If $\alpha_i=\beta_i=0$, then $P(z_i)=((1-t)a_i+tb_i,0)$, which is in $U$ for sufficiently large $i$. In the following, we consider the indices $i$ for which $(\alpha_i,\beta_i)\not=(0,0)$. First we notice that the parameter $(1-t)a_i \alpha_i+tb_i\beta_i$ of the hyperbola ${\rm Hyp}(z_i)$ tends to zero for $i\to\infty$. Therefore, there is a neighbourhood $U'\subset U$ with the following property. If $C\in U'$, then the line in $\R^2$ that is parallel to the vector $(1,1)$ and passes through $C$, intersects the hyperbola ${\rm Hyp}(z_i)$ in a point inside $U$. Define $A_i,B_i,C_i$ by (\ref{3.7n}), (\ref{3.8n}), (\ref{3.9n}) with $z$ replaced by $z_i$. Then $B_i\to ((1-t)a+tb,0)$, and $A_i$ remains bounded, since the functions $f,g,\varphi,\psi$ are bounded.  The weight of $A_i$ in
$$ C_i = \frac{((1-t)\alpha_i+t\beta_i)A_i+((1-t)a_i+tb_i)B_i}{(1-t)\alpha_i+t\beta_i+(1-t)a_i+tb_i}$$
tends to zero for $i\to\infty$, and it follows that $C_i\to ((1-t)a+tb,0)$. Therefore, $C_i\in U'$ and hence $P(z_i)\in U$ for sufficiently large $i$. This shows that $P$ is continuous at $z$. The case $a=b=0$, $\max\{\alpha,\beta\}>0$, is treated similarly.
\end{proof}

In the proof of Theorem \ref{Thm3.2}, it was important that we can take products of different intervals. In fact, the product $[f,g]\cdot[f,g]$ with $f\ge 0$ is always an interval, since for $H\in [f^2,g^2]$, the function $h=H^{1/2}$ satisfies $h\in[f,g]$ and $h^2=H$. That the assumption $f\ge 0$ cannot be deleted, is shown by the following theorem. It reveals that in general the product $[f,g]\cdot[f,g]$, even for continuous $f,g$, need not be an interval, because it need even not be convex.

\begin{thm}\label{Thm3.3} 
Assume that $K$ is first-countable. If for any continuous interval $[f,g]$ in $C(K)$ the product $[f,g]\cdot [f,g]$ is convex, then $K$ is finite.
\end{thm}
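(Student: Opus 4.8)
The plan is to argue by contraposition: assuming $K$ is first-countable and infinite, I will exhibit one continuous interval $[f,g]$ for which $[f,g]\cdot[f,g]$ fails to be convex. Since $K$ is infinite and compact, it has a non-isolated point $x_0$, and first-countability provides an injective sequence $(y_n)_{n\in\mathbb N}$ of points distinct from $x_0$ with $y_n\to x_0$; the set $A=\{x_0\}\cup\{y_n:n\in\mathbb N\}$ is then closed, being a convergent sequence together with its limit. The design principle is to defeat the square-root trick recalled just before the theorem (which works when $f\ge 0$) by letting $f$ change sign, and to place the threshold $|f|=g$ exactly at $x_0$, so that along the sequence the balance between $|f(y_n)|$ and $g(y_n)$ oscillates from one side to the other.

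Concretely, I would set $g\equiv\bar 1$ and use the Tietze extension theorem to define a continuous function $f:K\to[-2,0]$ with $f(x_0)=-1$ and $f(y_n)=-1+(-1)^n/n$. Then $[f,g]$ is a continuous interval with $f(x_0)=-g(x_0)$, while $|f(y_n)|<1=g(y_n)$ for even $n$ and $|f(y_n)|>1=g(y_n)$ for odd $n$. Since $f,g\in[f,g]$, both $f^2$ and $g^2=\bar 1$ lie in $[f,g]\cdot[f,g]$, so the function $H=\tfrac12(f^2+g^2)$ is a convex combination of two elements of the product. The crux of the argument is to show that $H\notin[f,g]\cdot[f,g]$.

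For this, suppose $H=h\eta$ with $h,\eta\in[f,g]$. As $H\ge\tfrac12>0$, the factors $h,\eta$ have the same sign at every point (a point of opposite signs would give a nonpositive value). The key pointwise computation is that at $y_n$ the value $H(y_n)=\tfrac12(f(y_n)^2+1)$ is too large to be realized by two nonnegative factors bounded by $1$ when $|f(y_n)|>1$, and too large to be realized by two nonpositive factors of modulus at most $|f(y_n)|$ when $|f(y_n)|<1$; hence the common sign of the pair is forced, negative for odd $n$ and positive for even $n$. Moreover, since $H\ge m^2/2$ with $m=\max(|f|,g)\ge g=1$, each factor satisfies $|h|\ge H/m\ge m/2\ge\tfrac12$, so $h(y_n)<-\tfrac12$ for odd $n$ and $h(y_n)>\tfrac12$ for even $n$. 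As both subsequences converge to $x_0$, this contradicts the continuity of $h$ at $x_0$. Therefore $H$ admits no factorization, the product $[f,g]\cdot[f,g]$ is not convex, and the contrapositive is established.

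The main obstacle is the forcing step in the last paragraph: one must exclude \emph{all} factorizations $H=h\eta$, not merely the diagonal choice $h=\eta$. The construction is arranged precisely so that the endpoint relation $f(x_0)=-g(x_0)$, together with $g$ being bounded away from $0$, simultaneously (i) pins down a definite, nonzero sign of the factors at each $y_n$ and (ii) keeps their moduli bounded below, so that the forced alternation of sign collides with continuity at $x_0$. This is exactly the mechanism that is unavailable in the nonnegative setting of Theorem \ref{Thm3.2a}, where no such sign conflict can arise.
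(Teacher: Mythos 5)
Your proposal is correct and follows essentially the same route as the paper's proof: the same convergent sequence $y_n\to x_0$, the same oscillating choice $f(y_n)=-1+(-1)^n/n$, the same test function $H=\tfrac12(f^2+g^2)$, and the same sign-forcing argument along even and odd indices leading to a contradiction with the continuity of $h$ at $x_0$. The only (harmless) difference is that you take $g\equiv\bar 1$ where the paper takes $g(y_n)=1+(-1)^n/n$, and your uniform lower bound $|h|\ge H/m\ge\tfrac12$ is a slightly cleaner way to keep the factors bounded away from zero.
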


\begin{proof} Assume that the assumptions are satisfied, but $K$ is not finite. Then there is a point $x_0\in K$ which is not an isolated point. Since $K$ is a first-countable Hausdorff space, there is an injective sequence $(y_n)_{n\in{\mathbb N}}$ in $K\setminus\{x_0\}$ that converges to $x_0$. The set $X =\{y_n:n\in{\mathbb N}\}\cup\{x_0\}$ is closed. Define
$f:X\to [-2,0]$ and $g:X\to [0,2]$ by 
$$ f(y_n)=-1+(-1)^n/n,\qquad g(y_n)=1+(-1)^n/n  \quad\mbox{for }n\in{\mathbb N}$$
and $f(x_0)=-1$, $g(x_0)=1$.
Since $K$ is a normal space, there exist continuous extensions $f:K\to [-2,0]$ and $ g:K\to [0,2]$. Then $f\le g$ on $K$.

If we define
$$
H=\frac{1}{2}f^2+\frac{1}{2}g^2\,,
$$
then
$$
H(y_n)=1+\frac{1}{n^2}
$$
for every $n\in{\mathbb N}$.
Since $f^2,g^2\in[f,g]\cdot[f,g]$ and the latter is convex by assumption, we have 
$$H\in [f,g]\cdot[f,g].$$ 
Therefore, there are functions $h,\eta\in [f,g]$ such that $h\eta=H$. For each $n\in{\mathbb N}$ we have
$$ h(y_n)\eta(y_n)= 1+\frac{1}{n^2}.$$
In particular, $h(y_n)$ and $\eta(y_n)$ have the same sign. Let $n$ be even. If $h(y_n)$ and $\eta(y_n)$ are both negative, then
$$ h(y_n)\eta(y_n) \le \left(-1+\frac{1}{n}\right)^2 <1+ \frac{1}{n^2},$$
a contradiction. Therefore, $h(y_n)$ and $\eta(y_n)$ are both positive, which implies that
$$ 1+\frac{1}{n^2} = h(y_n)\eta(y_n)\le h(y_n)g(y_n) =h(y_n)\left(1+\frac{1}{n}\right)$$
and thus $h(y_n)\ge 1-1/n$. Let $n$ be odd. If $h(y_n)$ and $\eta(y_n)$ are both positive, then
$$ h(y_n)\eta(y_n) \le \left(1-\frac{1}{n}\right)^2 <1+ \frac{1}{n^2},$$
a contradiction. Therefore, $h(y_n)$ and $\eta(y_n)$ are both negative, and
$$ 1+\frac{1}{n^2} = |h(y_n)\eta(y_n)|\le |h(y_n)||f(y_n)| =|h(y_n)|\left(1+\frac{1}{n}\right),$$
hence $h(y_n)\le -1+1/n$. Thus, every neighbourhood of $x_0$ contains points where $h$ is arbitrarily close to $1$ and points where $h$ is arbitrarily close to $-1$. Since $h$ is continuous, this is a contradiction. It shows that the space $K$ is finite.
\end{proof}

\section{Square roots of intervals}\label{sec5}

A {\em convex body} in $C(K)$ is a nonempty bounded closed convex subset of $C(K)$. In \cite{MS15}, Theorem 4.3, the following was shown. If $C,D\subset C(K)$ are convex bodies such that the closure of the sum C+D is an interval, then each of $C$ and $D$ is an interval. One may ask whether an analogous result is true for products instead of sums. But it is easily seen that this is not the case (see the example below).

Now we restrict ourselves to products of a convex body with itself, and we say that the convex body $C$ is a {\em square root} of the interval $[f,g]$ if $C \cdot C=[f,g]$. It is easy to see that an interval can have a square root that is not an interval, provided that $K$ is not a singleton. For example, let $x_0,x_1$ be distinct points of $K$, let $$ A=\{f\in[-\bar 1,\bar 1]: f(x_0)\ge 0\},\qquad B=[-\bar 1,-\bar 1].$$ 
Then $A$ and $B$ are convex bodies in $C(K)$. Let $C$ be the convex hull of $A\cup B$; it is closed. Then $C\cdot C\subset [-\bar 1,\bar 1]$ trivially. Conversely, $f\in [-\bar 1,\bar 1]$ satisfies either $f(x_0)\ge 0$ and  hence $f= f\cdot \bar 1\in C\cdot C$, or $f(x_0)<0$ and hence $f=(-f)\cdot (-\bar 1)\in C\cdot C$. Thus, $C$ is a square root of $[-\bar 1,\bar 1]$. On the other hand, $C$ is not an interval. If it were an interval, it could only be the interval $[-\bar 1,\bar 1]$, but a function $h\in [-\bar 1,\bar 1]$ with $h(x_0)=-1$ and $h(x_1)=1$ (which exists by Urysohn's lemma) does not belong to $C$. 

This raises the question: which intervals have a square root that is an interval. Trivially, this holds for intervals $[f,g]$ with $f\ge 0$. In fact, if $f\ge 0$ and if $H\in[f,g]$, then $\sqrt{H}\in [\sqrt{f},\sqrt{g}]$, which shows that $[\sqrt{f},\sqrt{g}]\cdot[\sqrt{f},\sqrt{g}]=[f,g]$.

The following result provides additional information, which will be used below to show that also an assumption on the  existence of square roots leads to finite spaces. 

\begin{prop}\label{P2}
Let $[f,g]$ be an interval in canonical representation in $C(K)$. In order that $[f,g]$ have a square root which is an interval,
it is necessary that
\begin{equation}\label{3.33} 
g\ge 0\quad\mbox{ and }\quad |f|\le g\, ,
\end{equation}
and it is sufficient that $g$ is continuous and
\begin{equation}\label{3.34} 
-g\le f\le 0<g\,.
\end{equation}
\end{prop}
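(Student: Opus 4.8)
The plan is to reduce both implications, via Lemma~\ref{Lem3.1}(i), to the elementary behaviour of the algebraic square of a single real interval. First I would record that for $a\le b$ a short computation gives
\[
[a,b]\cdot[a,b]=\big[\,m(a,b),\,M(a,b)\,\big],\qquad M(a,b)=\max\{a^2,b^2\}\ge 0,
\]
where $m(a,b)=\min\{a^2,b^2\}$ if $a,b$ have the same sign and $m(a,b)=ab$ if $a\le 0\le b$. Both directions of the proposition rest on this fact.

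For the necessity of (\ref{3.33}), suppose $[a,b]$ is an interval with $[a,b]\cdot[a,b]=[f,g]$; passing to the canonical representation of $[a,b]$ and applying Lemma~\ref{Lem3.1}(i) yields $[f(x),g(x)]=[a(x),b(x)]\cdot[a(x),b(x)]$ for every $x\in K$. Hence $g(x)=M(a(x),b(x))\ge 0$, giving $g\ge 0$, while $f(x)=m(a(x),b(x))$. Checking the three sign configurations of $(a(x),b(x))$ shows $|m(a,b)|\le M(a,b)$ in each case; the only case needing care is $a\le 0\le b$, where $|f(x)|=|a(x)|\,b(x)\le\max\{a(x)^2,b(x)^2\}=g(x)$. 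This yields $|f|\le g$.

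For the sufficiency, assuming $g$ continuous and $-g\le f\le 0<g$, I would exhibit an explicit interval square root rather than appeal to an existence argument. Since $g$ is continuous and strictly positive on the compact space $K$, it is bounded below by a positive constant, so
\[
\beta:=\sqrt{g},\qquad \alpha:=f/\sqrt{g}
\]
are bounded functions with $\alpha\le 0<\beta$; thus $[\alpha,\beta]$ is a nonempty interval. I claim $[\alpha,\beta]\cdot[\alpha,\beta]=[f,g]$. For the inclusion $\subseteq$, the hypothesis $-f\le g$ forces $|\alpha|=|f|/\sqrt g\le\sqrt g=\beta$, so any $h,\eta\in[\alpha,\beta]$ satisfy $|h|,|\eta|\le\sqrt g$ and hence $h\eta\le g$; for the lower bound $h\eta\ge f$ I would split into $h\eta\ge 0$ (where $h\eta\ge 0\ge f$) and $h\eta<0$, in which $h,\eta$ have opposite signs and monotonicity using $\alpha\le h,\eta\le\beta$ gives $h\eta\ge\alpha\beta=f$.

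The reverse inclusion is the step where one might expect to need a Kat\v{e}tov--Tong insertion or a continuous-selection argument as in Theorems~\ref{Thm3.1} and \ref{Thm3.2a}, and it is the point I would watch most carefully; however, fixing one factor makes it immediate. Given $H\in[f,g]$, set $\eta:=\sqrt g$ and $h:=H/\sqrt g$. Both are continuous, $\eta=\beta\in[\alpha,\beta]$, and $\alpha\le h\le\beta$ because these inequalities are equivalent to $f\le H\le g$ after multiplying by $\sqrt g>0$; since $h\eta=H$, we obtain $H\in[\alpha,\beta]\cdot[\alpha,\beta]$. This proves $[f,g]=[\alpha,\beta]\cdot[\alpha,\beta]$, so $[\alpha,\beta]$ is the desired square root. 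Thus the genuine case-work is confined to the sign analysis in the necessity part, while sufficiency collapses to a one-line factorization once the correct $\alpha,\beta$ are identified.
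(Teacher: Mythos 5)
Your proposal is correct and takes essentially the same approach as the paper: both arguments reduce necessity to the pointwise description of $[a,b]\cdot[a,b]$ for a real interval via Lemma \ref{Lem3.1}(i) applied to the canonical representation of the square root, and both prove sufficiency with the identical explicit square root $[f/\sqrt{g},\sqrt{g}\,]$ together with the factorization $H=\sqrt{g}\cdot(H/\sqrt{g})$.
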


\begin{proof} To prove the necessity consider, first, an interval $[a,b]$ with $a \le b$ in $\R$, and suppose that it has a square root (that is, an interval $[c,d]$ with $[c,d]\cdot[c,d]=[a,b]$). Then $b\ge 0$ trivially. If $a\ge 0$, then $[\sqrt{a},\sqrt{b}]$ and $[-\sqrt{b}, -\sqrt{a}]$ are the square roots of $[a,b]$. Let $a< 0\le b$. A square root of $[a,b]$ must be of the form $[c,d]$ with $c\le 0\le d$. If $d\ge |c|$, then $[c,d]\cdot [c,d] = [cd,d^2]$, hence $[a/\sqrt{b},\sqrt{b}]$ is a square root of $[a,b]$. If $d<|c|$, then $[c,d]\cdot [c,d] = [cd,c^2]$, hence $[-\sqrt{b},-a/\sqrt{b}]$ is a square root of $[a,b]$. In either case, $[a/\sqrt{b},\sqrt{b}]$ and $[-\sqrt{b}, -a/\sqrt{b}]$ are the square roots of $[a,b]$, and we have $|a|\le b$.

Now let $[f,g]$ be an interval in canonical representation in C(K). Suppose it has a square root which is an interval. Then we can assume that the latter is also in canonical representation. Now it follows from Lemma \ref{Lem3.1}(1), by
applying the preceding observation to $[a,b]=[f(x),g(x)]$ for $x\in K$, that condition (\ref{3.33}) is necessary, as stated. 

To prove the sufficiency, suppose that (\ref{3.34}) is satisfied and that $g$ is continuous. Define, for $x\in K$, 
$$ \varphi(x)= f(x)/\sqrt{g(x)}, \quad \psi(x)=\sqrt{g(x)}\,.$$
For each $x\in K$ we have $[\varphi(x),\psi(x)] \cdot [\varphi(x),\psi(x)] = [f(x),g(x)]$. This shows that $[\varphi,\psi] \cdot[\varphi,\psi]\subset [f,g]$. If $H\in [f,g]$, the functions $h= \sqrt{g}$ and $\eta = H/\sqrt{g}$
satisfy $h,\eta\in[\varphi,\psi]$ and $h\eta=H$. It follows that $[\varphi,\psi] \cdot[\varphi,\psi]=[f,g]$.
\end{proof}

The continuity in the second part of Proposition \ref{P2} is essential, as the following theorem shows.

\begin{thm}\label{Thm3.6}
Let $K$ be first-countable. If every interval $[f,g]$ in canonical representation in $C(K)$ satisfying $(\ref{3.34})$ has a square root which is an interval, then $K$ is finite.
\end{thm}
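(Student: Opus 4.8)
The plan is to argue by contraposition. Assuming $K$ is infinite, I will exhibit a \emph{single} interval $[f,g]$ in canonical representation satisfying (\ref{3.34}) that has no square root which is an interval; by hypothesis this is impossible, so $K$ must be finite. Since $K$ is compact and infinite it has a non-isolated point $x_0$, and first countability provides an injective sequence $(y_n)_{n\in\mathbb N}$ in $K\setminus\{x_0\}$ converging to $x_0$, exactly as in the proofs of Theorems \ref{Thm3.2} and \ref{Thm3.3}. The whole construction is driven by a \emph{downward jump of $g$ at $x_0$ while $f$ stays constant}, which is precisely the feature excluded by the continuity hypothesis in Proposition \ref{P2}. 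Concretely I would set $f=-\bar 1$ and let $g$ be the bounded function equal to $4$ on $K\setminus\{x_0\}$ and to $1$ at $x_0$. Then $f$ is continuous, hence u.s.c., and $g$ is l.s.c.\ (the only place to check is $x_0$, where $\liminf_{y\to x_0}g(y)=4\ge 1=g(x_0)$); since $K$ is compact, hence normal, a u.s.c.\ function equals its upper envelope and an l.s.c.\ one its lower envelope, so $f=f^\vee$, $g=g^\wedge$ and $[f,g]$ is in canonical representation. One checks $-g\le f\le 0<g$, so (\ref{3.34}) holds, and $\bar 0\in[f,g]$, so the interval is nonempty.

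The key reduction is the following. Suppose $[f,g]$ had a square root which is an interval, and put it in canonical representation $[\varphi,\psi]$. Applying Lemma \ref{Lem3.1}(i) to $[f,g]=[\varphi,\psi]\cdot[\varphi,\psi]$ (with both factors equal to $[\varphi,\psi]$) shows that for every $x\in K$ the real interval $[\varphi(x),\psi(x)]$ is a square root of $[f(x),g(x)]$ in $\R$. Because $f(x)<0<g(x)$ everywhere, the computation in the proof of Proposition \ref{P2} gives $\varphi(x)\le 0\le\psi(x)$ and shows that the unordered pair $\{\psi(x),-\varphi(x)\}$ must equal $\{\sqrt{g(x)},\,|f(x)|/\sqrt{g(x)}\}$. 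Writing $P=\psi$ and $Q=-\varphi$, both functions are nonnegative and l.s.c.\ (since $\psi$ is l.s.c.\ and $\varphi$ is u.s.c.), and with our choice of $f,g$ they satisfy $\{P(y_n),Q(y_n)\}=\{2,\tfrac12\}$ for every $n$, while $P(x_0)=Q(x_0)=1$.

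The contradiction is then a parity/pigeonhole argument. For each $n$ exactly one of $P(y_n),Q(y_n)$ equals $\tfrac12$, so either $P(y_n)=\tfrac12$ for infinitely many $n$, or $Q(y_n)=\tfrac12$ for infinitely many $n$. In the first case $\liminf_{y\to x_0}P(y)\le\liminf_{n}P(y_n)\le\tfrac12<1=P(x_0)$, contradicting the lower semicontinuity of $P$ at $x_0$; in the second case the identical estimate contradicts the lower semicontinuity of $Q$ at $x_0$. Hence no square root of $[f,g]$ can be an interval, so the standing hypothesis forces $x_0$ to be isolated, i.e.\ $K$ is finite.

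I expect the main obstacle to be the \emph{design of the discontinuity} so that it defeats every admissible branch choice at once. At each $y_n$ one is free to pick either of the two real square roots of $[f(y_n),g(y_n)]$, and the difficulty is to arrange $f,g$ so that no globally lower-semicontinuous selection of these branches exists, while simultaneously pinning the value at the limit point: the coincidence of the two branches at $x_0$ (forced by $|f(x_0)|=g(x_0)$) is what fixes $P(x_0)=Q(x_0)=1$ strictly above the small branch value $\tfrac12$, which is what the semicontinuity estimate then violates. The constant choice $f=-\bar 1$ together with a single downward jump of $g$ realizes exactly this; the remaining points are the routine verifications that $[f,g]$ is genuinely in canonical representation and that Lemma \ref{Lem3.1}(i) is invoked with both factors equal to $[\varphi,\psi]$.
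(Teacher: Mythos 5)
Your proof is correct, and while its overall skeleton (contraposition, non-isolated point $x_0$, a deliberately discontinuous l.s.c.\ upper bound $g$, reduction to the pointwise classification of real square roots from Proposition \ref{P2} via Lemma \ref{Lem3.1}(i)) parallels the paper's, the mechanism of the final contradiction is genuinely different. The paper takes $f\equiv -1/2$ and lets $g$ \emph{oscillate along the sequence} ($g(y_n)=1/2$ for odd $n$, $=1$ for even $n$, $g(x_0)=1/2$); it then extracts functions $h,\eta\in[\varphi,\psi]$ with $h\eta=f$ and shows $h(y_n)$ is forced into $\{\pm 1/\sqrt{2}\}$ for odd $n$ and into $\{\pm 1/2,\pm 1\}$ for even $n$, two sets at positive distance, contradicting the \emph{continuity of the element} $h$ at $x_0$. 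You instead concentrate the whole discontinuity in a single downward jump of $g$ at $x_0$ and never need to produce elements of the square root: the contradiction lives entirely in the \emph{semicontinuity of the boundary functions} $\psi$ and $-\varphi$ of the canonical representation, via the pigeonhole on which of the two branches $\{2,1/2\}$ is selected at each $y_n$. Each approach buys something: the paper's argument only uses the (perhaps more robust) fact that elements of an interval are continuous, whereas yours exploits the finer structural fact that the canonical bounds of an interval are semicontinuous. A small bonus of your route is that the sequence is used only for the pigeonhole, which can be replaced by the observation that $x_0$ lies in the closure of $\{\psi=1/2\}\cup\{-\varphi=1/2\}=K\setminus\{x_0\}$ and hence in the closure of one of the two sets; so your construction in fact dispenses with first countability altogether and shows the conclusion for every infinite compact Hausdorff $K$ (this does not conflict with the paper's remark on $\beta\mathbb N$, which concerns intervals with \emph{continuous} $f,g$). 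The remaining verifications you flag (that $f=f^\vee$ and $g=g^\wedge$ because u.s.c.\ and l.s.c.\ functions on a normal space coincide with their envelopes, and that Lemma \ref{Lem3.1}(i) applies with both factors equal to $[\varphi,\psi]$) are indeed routine and are implicitly used in the paper's own proof as well.
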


\begin{proof}
Let $K$ be first-countable, and suppose every interval in canonical representation satisfying (\ref{3.34}) has a square root which is an interval. Assume that $K$ is not finite. Then $K$ has a point $x_0$ that is not isolated. Let $(y_n)_{n\in{\mathbb N}}$ be an injective sequence in $K\setminus\{x_0\}$ converging to $x_0$. We define $f=-1/2$ on $K$ and
$$ g(x)=\left\{\begin{array}{ll} 1/2 & \mbox{for $x=y_n$ with odd $n$ and for $x=x_0$,} \\[2mm] 1 & \mbox{for the remaining $x\in K$}. \end{array} \right.$$
Then $g$ is l.s.c., and $[f,g]$ is an interval in canonical representation satisfying (\ref{3.34}). By assumption, it has a square root which is an interval, say $[\varphi,\psi]$, without loss of generality also in canonical representation. Therefore, in particular, the function $f$ has a representation $f=h\eta$ with $h,\eta\in[\varphi,\psi]$.

Let $x=y_n$ with even $n$. Then $[f(x),g(x)]=[-1/2,1]$. As shown in the proof of Proposition \ref{P2}, we must have either $[\varphi(x),\psi(x)]= [-1/2,1]$ or $[\varphi(x),\psi(x)]= [-1,1/2]$. Since $-1/2 =f(x)=h(x)\eta(x)$, this yields
$$ h(x)\in\{\pm 1/2,\pm 1\}.$$

Let $x=y_n$ with odd $n$. Then $[f(x),g(x)]=[-1/2,1/2]$ and $[\varphi(x),\psi(x)]=[-1/\sqrt{2},1/\sqrt{2}]$. Since $-1/2 =f(x)=h(x)\eta(x)$, this yields 
$$ h(x)\in\{\pm 1/\sqrt{2}\}.$$
Thus, in every neighbourhood of $x_0$, there are points (namely $y_n$ with sufficiently large even $n$) were $h$ attains one of the values $\pm 1/2,\pm 1$, and points (namely $y_n$ with sufficiently large odd $n$) were $h$ attains one of the values $\pm 1/\sqrt{2}$. This contradicts the continuity of $h$.
\end{proof}

We remark that, in analogy to Proposition \ref{P1}, in $C(\beta\mathbb N)$ any interval $[f,g]$ with continuous functions $f,g$ satisfying (\ref{3.34}), has a square root which is an interval.

\subsection*{Acknowledgements}
This research was partly supported by Ministerio de Ciencia e Innovaci\'{o}n, grant MTM2012-34341. 

We thank the anonymous referee for a thorough report, and we highly appreciate his/her valuable suggestions for improvements.

\end{document}